\DeclareMathOperator{\argmin}{argmin}
\DeclareMathOperator{\aff}{aff}
\begin{document}

\newtheorem{oberklasse}{OberKlasse}
\newtheorem{definition}[oberklasse]{Definition}
\newtheorem{lemma}[oberklasse]{Lemma}
\newtheorem{proposition}[oberklasse]{Proposition}
\newtheorem{theorem}[oberklasse]{Theorem}
\newtheorem{corollary}[oberklasse]{Corollary}
\newtheorem{remark}[oberklasse]{Remark}
\newtheorem{example}[oberklasse]{Example}

\newcommand{\R}{\mathbbm{R}}
\newcommand{\N}{\mathbbm{N}}
\newcommand{\Z}{\mathbbm{Z}}
\newcommand{\C}{\mathbbm{C}}
\newcommand{\mc}{\mathcal}

\renewcommand{\odot}
        {\mathop{\bigcirc}}

\long\def\mynote#1{%
        \leavevmode\unskip\raisebox{-3.5pt}{\rlap{$\scriptstyle\diamond$}}%
        \marginpar{\raggedright\hbadness=10000
        \def\baselinestretch{0.8}\tiny
        \it #1\par}}

\title{Generalized Gearhart-Koshy acceleration
is a Krylov subspace method}
\author{Markus Hegland and Janosch Rieger}
\date{\today}
\maketitle

\begin{abstract}
The Kaczmarz method is a row-action method for solving consistent non-square
linear systems, and
Gearhart-Koshy acceleration is a line-search that minimizes the Euclidean
norm of the error along a ray in the direction of a Kaczmarz step.
Recently one of the authors generalized this procedure
to a search for the point with minimal Euclidean error norm within a sequence
of nested affine subspaces.

In this paper, we demonstrate that this generalization can be interpreted
as a Krylov subspace method for a square linear system,
which is equivalent with the original system to be solved.
In exact arithmetic the method cannot break down prematurely, and it makes
progress in every step.
We also present a mathematically equivalent reformulation of the algorithm
in terms of the Gram-Schmidt orthogonalization procedure, and
we illustrate the convergence behavior of the new method
with numerical experiments.
\end{abstract}

\noindent\textbf{MSC Codes:} 65F10, 65F08\\[-2ex]

\noindent\textbf{Keywords:} block Kaczmarz method, Krylov subspace methods,
minimal error approximation

\section{Introduction}

The Kaczmarz method is a fixed point iteration for solving consistent non-square
linear systems $Ax=b$, see e.g.\ \cite{Elfving}, \cite{Kaczmarz} and
\cite{Tanabe}, with applications in inverse problems, and, in particular,
in the context of medical imaging.
It is a row-action method \cite{Censor}, which means that it carries out
substeps that use one row of the matrix $A$ at a time.
Recently, the randomized Kaczmarz method \cite{Gower,Strohmer},
which selects the next row to be processed at random, has received a lot
of attention, and different ways of speeding up the basic algorithm have
been explored \cite{Necoara,Needell,Steinerberger}.

\medskip

The Gearhart-Koshy line search aims to accelerate the deterministic Kaczmarz
method \cite{Bauschke,Gearhart,Tam}.
Given a point $x$, it determines the closest point to the solution set of the
linear system $Ax=b$ on the ray that emanates from $x$ and passes through the
point $P(x)$ determined by carrying out one step of the Kaczmarz method from $x$.
One of the authors developed this technique further \cite{Rieger} and derived
an iteration that, in each step, computes the best Euclidean norm approximation
to the solution set of the linear system $Ax=b$ within the affine subspace spanned
by the previous iterates $x_0$ to $x_k$ and the point $P(x_k)$ obtained by
applying the Kaczmarz method to $x_k$.
This optimality indicates that the method described in \cite{Rieger} belongs
to the class of Krylov subspace methods \cite{Liesen, Meurant}, which generate
iterates that minimize a quantity such as a norm of the error or of the residual
over the respective Krylov subspaces, but it is not obvious why this would be
the case.

\medskip

The main purpose of this paper is to show that the method from \cite{Rieger},
when applied to the linear system $Ax=b$, can indeed be interpreted as a Krylov
subspace method for a different linear system $Cx=g$ with a square, but,
in general, non-symmetric matrix $C$, and to develop a mathematically equivalent
numerical method that is based on Gram-Schmidt orthogonalization.
In addition, we slightly generalize the method from \cite{Rieger} to a block
Kaczmarz method, which makes more efficient use of parallel processing
capabilities, and we show that this method makes progress in every single step
and cannot break down prematurely in exact arithmetic.

Finally, we compare the new algorithm with Craig's method, a Krylov subspace method
that also minimizes the Euclidean norm of the error over the Krylov subspaces
it generates, in a number of numerical experiments.
Our computations suggest that the new algorithm cannot compete with Craig's
method on systems with low condition numbers, but that it may perform better
on problems with high condition numbers.

\section{The block Kaczmarz method}

Let $A\in\R^{m\times n}$ and $b\in\R^m$.
Throughout this paper, we assume that the linear system
\begin{equation}\label{to:solve}
Ax=b
\end{equation}
possesses at least one solution.
We consider a partitioning of $A$ and $b$ into matrices
$A_j\in\R^{m_j\times n}$ and vectors $b_j\in\R^{m_j}$ with $j\in\{1,\ldots,p\}$
such that
\[
A^T=(A_1^T,\ldots,A_p^T)
\quad\text{and}\quad
b^T=(b_1^T,\ldots,b_p^T).
\]

\begin{definition}\label{block:Kaczmarz}
A block Kaczmarz iteration for problem \eqref{to:solve} is the fixed point iteration
given by the affine operator
\[P=P_p\circ\ldots\circ P_1\]
with $P_j:\R^n\to\R^n$ of the form
\begin{equation}\label{short:block:form}
P_j(x):=x+A_j^\dagger(b_j-A_jx),\quad
j\in\{1,\ldots,p\}.
\end{equation}
\end{definition}

For an interpretation of the mappings $P_j$ see Lemma \ref{proj:lemma}.
The behavior of the fixed point iteration induced by $P$ is discussed
in Theorem \ref{well:known}.

\begin{remark}\label{reduce:to:Kaczmarz}
When $p=m$ and $m_j=1$ for $j\in\{1,\ldots,p\}$,
i.e.\ when the blocks $A_1,\ldots,A_m$ are just the rows $a_1^T,\ldots,a_m^T$
of the matrix $A$, then the block Kaczmarz method reduces to the usual Kaczmarz
method, which is the composition
$P=P_m\circ\ldots\circ P_1$
of the projections
\[P_j:\R^n\to\R^n,\quad
P_j(x):=(I-\frac{a_ja_j^T}{\|a_j\|^2})x
+\frac{b_j}{\|a_j\|^2}a_j,\quad
j\in\{1,\ldots,m\}.\]
\end{remark}

\medskip

The following lemma details the geometry of a single block Kaczmarz step
and its effect on the square error.
Part b) allows to precompute and store the
pseudoinverses of the small matrices $A_jA_j^T$ instead of the pseudoinverses
of the matrices $A_j$ in Algorithm \ref{Block:kaczmarz:algorithm}.
Part d) is essential for computing the coefficients
in the Gearhart-Koshy acceleration step.
Note that the entries of the vector $w(x)$ are simply the norms
of the steps $d_j$ in Algorithm \ref{Block:kaczmarz:algorithm}.

\begin{lemma}\label{proj:lemma}
The following statements hold.
\begin{itemize}
\item [a)] The operator $P_j$ from equation \eqref{short:block:form} is the $\ell_2$ projector
onto the affine subspace $H_j:=\{z\in\R^n:A_jz=b_j\}$.
\item [b)] We can represent
$P_j(x)=x+A_j^T(A_jA_j^T)^\dagger(b_j-A_jx)$.
\item [c)] We have $\|x-z\|^2=\|x-P_j(x)\|^2+\|P_j(x)-z\|^2$ for all $z\in H_j$.
\item [d)] For every $x\in\R^n$ and every $x_*\in\R^n$ with $Ax_*=b$, we have
\[\|P(x)-x_*\|^2=\|x-x_*\|^2-\|w(x)\|^2,\]
where $w(x)\in\R^m$ is the vector given by
\[w_j(x)
:=\|A_j^T(A_jA_j^T)^\dagger(b_j-A_j(P_{j-1}\circ\ldots\circ P_1(x)))\|
\quad\forall\,j\in\{1,\ldots,p\}.\]
\end{itemize}
\end{lemma}

\begin{algorithm}\label{Block:kaczmarz:algorithm}
\caption{Block-Kaczmarz method}
\KwIn{
    $A_j\in\R^{m_j\times n}$ and
    $b_j\in\R^{m_j}$ for
    $j\in\{1,\ldots,p\}$,
    $y_0\in\R^n$
}
\KwOut{
\begin{minipage}[t]{10cm}
$y_p=P(y_0)$ with $P$ as in Definition \ref{block:Kaczmarz}
and Lemma \ref{proj:lemma}b)\\
$\omega=\|w(y_0)\|^2$ with $w(y_0)$ as in Lemma \ref{proj:lemma}d)
\end{minipage}
}
$\omega=0$\;
\For{$j=1$ \KwTo $p$}{
  $d_j=A_j^T(A_jA_j^T)^\dagger(b_j-A_jy_{j-1})$\;
  $y_j=y_{j-1}+d_j$\;
  $\omega=\omega+\|d_j\|^2$\;
}
\Return $y_p$, $\omega$
\end{algorithm}

\begin{proof}
a) Using that $A_jA_j^\dagger A_j=A_j$ \cite[(2.2.8)]{Bjorck},
that $A_jA_j^\dagger$ is the $\ell_2$ projector onto $\mc{R}(A_j)$
\cite[(2.2.6)]{Bjorck}, and that $b_j\in\mc{R}(A_j)$ by consistency
of $Ax=b$, we obtain
\[
A_jP_j(x)
=A_jx+A_jA_j^\dagger b_j- A_jA_j^\dagger A_jx
=A_jA_j^\dagger b_j
=b_j,
\]
so $P_j(x)\in H_j$.
For any $z\in H_j$, we thus have $P_j(x)-z\in\mc{N}(A_j)$, and since
\[x-P_j(x)=-A_j^\dagger(b_j-A_jx)\in\mc{R}(A_j^\dagger)=\mc{R}(A_j^T)=
\mc{N}(A_j)^\perp,\]
where $\mc{R}(A_j^\dagger)=\mc{R}(A_j^T)$ follows from
\cite[(2.2.1)]{Bjorck} and \cite[(2.2.3)]{Bjorck},
we find
\begin{equation}\label{ortho}
\langle x-P_j(x),P_j(x)-z\rangle=0,
\end{equation}
which implies that $P_j$ is the $\ell_2$ projector onto $H_j$.

b) Let $x_*\in\R^n$ satisfy $Ax_*=b$.
Using $A^\dagger A=(A^\dagger A)^T$ \cite[(2.2.9)]{Bjorck} and
$(A^\dagger)^T=(A^T)^\dagger$ \cite[Theorem 2.2.2.]{Bjorck},
we see that
\[A_j^\dagger A_j
=(A_j^\dagger A_j)^T
=(A_j)^T(A_j^\dagger)^T
=A_j^T(A_j^T)^\dagger,\]
and by \cite[(2.2.7)]{Bjorck} both $A_j^\dagger A_j$ and $A_j^T(A_j^T)^\dagger$
are representations of the (idempotent) $\ell_2$ orthogonal projector onto
$\mc{R}(A_j^T)$.
Further using $(A_j^T)^\dagger A_j^\dagger=(A_jA_j^T)^\dagger$
\cite[Theorem 2.2.3]{Bjorck}, we compute
\begin{align*}
P_j(x)&=x+A_j^\dagger(b_j-A_jx)
=x+A_j^\dagger A_j(x_*-x)
=x+A_j^T(A_j^T)^\dagger A_j^\dagger A_j(x_*-x)\\
&=x+A_j^T(A_jA_j^T)^\dagger A_j(x_*-x)
=x+A_j^T(A_jA_j^T)^\dagger(b_j-A_jx).
\end{align*}

c) is implied by equation \eqref{ortho}.

d) Applying c) with $z=x_*$ gives
\[\|P_j(x)-x_*\|^2=\|x-x_*\|^2-\|P_j(x)-x\|^2,\]
and the desired statement follows by induction.
\end{proof}

The following theorem summarizes geometry and convergence of the
block Kaczmarz method.
Statements d) and e) are part of the reason why the Krylov subspace method
we will develop makes progress in every single step.

\begin{theorem} \label{well:known}
For every $x\in\R^n$, the following statements hold:
\begin{itemize}
\item [a)] There exists a unique $x^*\in\R^n$
with $Ax^*=b$ and $x^*\in x+\mc{R}(A^T)$.
\item [b)] We have $P^k(x)\in x+\mc{R}(A^T)$ for all $k\in\N$.
\item [c)] We have $\|P(x)-P(z)\|\le\|x-z\|$ for all $z\in\R^n$.
\item [d)] We either have $P(x)=x$ or $\|P(x)-x^*\|<\|x-x^*\|$.
\item [e)] We have $Ax=b$ if and only if $P(x)=x$.
\item [f)] We have $\lim_{k\to\infty}P^k(x)=x^*$.
\end{itemize}
\end{theorem}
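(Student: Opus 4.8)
My plan is to prove the five statements in the given order, leaning throughout on the orthogonal splitting $\R^n=\mc{R}(A^T)\oplus\mc{N}(A)$ and on the error-reduction identity \eqref{P:error:redux} from Lemma \ref{proj:lemma}. For part a) I would use that $\mc{R}(A^T)=\mc{N}(A)^\perp$, so $x+\mc{R}(A^T)$ and the solution set, which is nonempty by consistency and is a coset of $\mc{N}(A)$, are cosets of complementary subspaces. Splitting $\bar x-x=u+v$ with $u\in\mc{R}(A^T)$ and $v\in\mc{N}(A)$ for an arbitrary solution $\bar x$, the point $x^*:=x+u=\bar x-v$ satisfies $Ax^*=b$ and $x^*\in x+\mc{R}(A^T)$; uniqueness follows since the difference of two such points lies in $\mc{N}(A)\cap\mc{R}(A^T)=\{0\}$. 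Part b) rests on the observation, already visible in the proof of Lemma \ref{proj:lemma}, that each factor displaces its argument only inside $\mc{R}(A_j^T)$, since $P_j(x)-x=A_j^\dagger(b_j-A_jx)\in\mc{R}(A_j^T)\subseteq\mc{R}(A^T)$; hence $P(x)-x$ is a sum of such increments and lies in $\mc{R}(A^T)$, and an induction on $k$ finishes the claim.

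Parts c) and d) I would extract directly from \eqref{P:error:redux} together with the fact that $\|w(x)\|^2=\sum_j w_j(x)^2$, where each $w_j(x)$ is the norm of the increment contributed by the $j$-th factor. If $w(x)\neq 0$ then \eqref{P:error:redux} yields the strict inequality, while $w(x)=0$ forces every increment to vanish and hence $P(x)=x$; this dichotomy is exactly part c). For part d), if $Ax=b$ then $x\in H_j$ for every $j$, so each projector fixes $x$ and $P(x)=x$. Conversely, $P(x)=x$ substituted into \eqref{P:error:redux} (with the $x^*$ from part a)) gives $\|w(x)\|=0$; consistency gives $b_j\in\mc{R}(A_j)$, so the injectivity of $A_j^\dagger$ on $\mc{R}(A_j)$ turns each vanishing increment into $A_j(P_{j-1}\circ\cdots\circ P_1(x))=b_j$, and since all increments vanish the intermediate points all equal $x$, whence $Ax=b$.

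I expect part e) to be the main obstacle, as it requires upgrading the monotone decrease of the error to genuine convergence of the iterates. The plan is first to note that, by part c), the scalars $\|P^k(x)-x^*\|^2$ are nonincreasing and bounded below, hence convergent, and that telescoping \eqref{P:error:redux} gives $\sum_{k}\|w(P^k(x))\|^2\le\|x-x^*\|^2<\infty$, so that $\|w(P^k(x))\|\to 0$. The iterates lie in the ball of radius $\|x-x^*\|$ about $x^*$, so in finite dimension every subsequence admits a convergent sub-subsequence; if $P^{k_\ell}(x)\to\bar x$, then continuity of $w$ yields $w(\bar x)=0$, hence $P(\bar x)=\bar x$ and, by part d), $A\bar x=b$. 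Because each $P^{k_\ell}(x)\in x+\mc{R}(A^T)$ and this affine subspace is closed, $\bar x\in x+\mc{R}(A^T)$, so the uniqueness in part a) forces $\bar x=x^*$. Since every subsequential limit of the bounded sequence equals $x^*$, the whole sequence converges to $x^*$, which is part e).
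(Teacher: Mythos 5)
Your proof is correct, and parts a)--d) follow essentially the paper's own route: a) via the splitting $\R^n=\mc{N}(A)\oplus\mc{R}(A^T)$, b) via $\mc{R}(A_j^\dagger)=\mc{R}(A_j^T)$, and c), d) via the error-reduction identities of Lemma \ref{proj:lemma} (you phrase c) and the converse of d) through $\|w(x)\|^2$ and \eqref{P:error:redux} and argue d) directly, where the paper works with \eqref{general:Hj} and argues the converse of d) by contraposition through a smallest violated block index --- these are interchangeable). The only genuinely different step is e). The paper also extracts a convergent subsequence by boundedness, but identifies its limit by contradiction: assuming $x_*\neq x^*$, continuity of $P$ together with the strict decrease from c), d) gives $\limsup_k\|P^k(x)-x^*\|\le\|P(x_*)-x^*\|<\|x_*-x^*\|$, contradicting convergence to $x_*$. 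You instead telescope \eqref{P:error:redux} to get $\sum_k\|w(P^k(x))\|^2\le\|x-x^*\|^2$, conclude $\|w(P^k(x))\|\to 0$, and use continuity of $w$ to see that every subsequential limit is a fixed point, hence equals $x^*$ by a) and the closedness of $x+\mc{R}(A^T)$. Both arguments are sound; yours is the more standard ``Fej\'er monotonicity plus vanishing displacement'' pattern and avoids the contradiction, at the small cost of invoking continuity of the map $x\mapsto w(x)$ (which does hold, since each $P_j$ is affine) rather than just continuity of $P$. Either way the final step --- a bounded sequence in $\R^n$ all of whose subsequential limits coincide must converge --- is the same.
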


\begin{proof}
a) By assumption there exists $x_*\in\R^n$ such that $Ax_*=b$.
Since $\R^n=\mc{N}(A)\oplus\mc{R}(A^T)$, there exist
$x^N,x_*^N\in\mc{N}(A)$ and $x^R,x_*^R\in\mc{R}(A)$
with $x=x^N+x^R$ and $x_*=x_*^N+x_*^R$.
Then the vector $x^*:=x_*^R+x^N$ satisfies
\[Ax^*=A(x_*^R+x^N)=A(x_*)+A(-x_*^N+x^N)=b\]
and $x^*-x=x_*^R-x^R\in\mc{R}(A^T)$.
If there are $x_*,x_{**}\in x+\mc{R}(A^T)$ with $Ax_*=b=Ax_{**}$,
then
\[x_*-x_{**}=(x_*-x)-(x_{**}-x)\in\mc{N}(A)\cap\mc{R}(A^T)=\{0\}.\]

\medskip

Statement b) follows from Definition \ref{block:Kaczmarz}
because $\mc{R}(A_j^\dagger)=\mc{R}(A_j^T)\subset\mc{R}(A^T)$.
Statement c) holds, because $P$ is a composition of $\ell_2$ projectors.

\medskip

d)
If $P(x)=x$, then $\|P(x)-x^*\|^2\not<\|x-x^*\|^2$.
If $P(x)\neq x$, then there exists a smallest index $i\in\{1,\ldots,p\}$
with $P_i(x)\neq x$.
In particular, we have
\begin{equation}\label{alle:weg}
\|P(x)-x^*\|^2 =\|P_p\circ P_{p-1}\circ\ldots\circ P_i(x)-x^*\|^2.
\end{equation}
Now for every $j\in\{i,\ldots,p-1\}$, Lemma \ref{proj:lemma}c) with $z=x^*$
yields
\begin{align*}
&\|P_{j+1}\circ P_j\circ\ldots\circ P_i(x)-x^*\|^2\\
&=\|P_j\circ\ldots\circ P_{i}(x)-x^*\|^2
 -\|P_{j+1}\circ\ldots\circ P_i(x)-P_j\circ\ldots\circ P_{i}(x)\|^2\\
&\le\|P_j\circ\ldots\circ P_{i}(x)-x^*\|^2.
\end{align*}
Applying the above argument $p-i$ times to \eqref{alle:weg}
and then Lemma \ref{proj:lemma}c) once more yields
\[\|P(x)-x^*\|^2
\le\|P_{i}(x)-x^*\|^2
=\|x-x^*\|^2-\|P_i(x)-x\|^2
<\|x-x^*\|^2.\]

e)
If $Ax=b$ holds, then \eqref{short:block:form} gives $P_j(x)=x$
for all $j\in\{1,\ldots,p\}$, and hence $P(x)=x$.
If $Ax\neq b$ holds, then there is a smallest index $i\in\{1,\ldots,p\}$
with $0\neq b_i-A_ix$.
Since $b_i-A_ix\in\mc{R}(A_i)$ and
$\mc{N}(A_i^\dagger)=\mc{N}(A_i^T)=\mc{R}(A_i)^\perp$,
we have $P_i(x)=x+A_i^\dagger(b_j-A_jx)\neq x$,
and the same argument as in part d) yields
$\|P(x)-x^*\|^2<\|x-x^*\|^2$ and thus $P(x)\neq x$.

\medskip

f) We will show in Lemma \ref{TC} that we can write
$P(x)=Tx+g$ with a matrix $T\in\R^{n\times n}$ and a vector $g\in\R^n$,
and that relative to the 
decomposition $\R^n=\mc{N}(A)\oplus\mc{R}(A^T)$
we can represent $T=I\oplus T_2$ with $\|T_2\|<1$.
It follows with part b) that $P|_{x+\mc{R}(A^T)}:x+\mc{R}(A^T)\to x+\mc{R}(A^T)$
is a contraction, and the desired statement follows from the contraction
mapping principle.
\end{proof}

Throughout the rest of the paper, we will fix an initial point $x_0\in\R^n$
and the corresponding unique solution $x^*\in\R^n$ of the equation $Ax=b$
as in Theorem \ref{well:known}a).

\section{Generalized Gearhart-Koshy acceleration}

In this section, we characterize Algorithm \ref{kaczmarz:krylov}, which is a
slight generalization of Algorithm 3 from \cite{Rieger}, to the block Kaczmarz
setting.
We follow the approach from \cite{Rieger} very closely.

\begin{algorithm}\label{kaczmarz:krylov}
\caption{Generalized Gearhart-Koshy acceleration}
\KwIn{
    $A_j\in\R^{m_j\times n}$ and
    $b_j\in\R^{m_j}$ for
    $j\in\{1,\ldots,p\}$,
    $x_0\in\R^n$
}
\KwOut{
    solution $x_k\in\R^n$ of the linear system $Ax=b$
}
\For{$k=0$ \KwTo $\infty$}{
    $(P(x_k),\omega_k)
    =\text{[Algorithm \ref{Block:kaczmarz:algorithm}]}
    (A_1,\ldots,A_p,b_1,\ldots,b_p;x_k)$\;
    $r_k=P(x_k)-x_k$\;
    \uIf{$\|r_k\|^2=0$}{\Return $x_k$\;}
    $\gamma_k=(\omega_k+\|r_k\|^2)/2$\;
    $M_k=(x_0-x_k,\ldots,x_{k-1}-x_k,r_k)$\;
    solve $M_k^TM_ks_k=\gamma_k e_{k+1}$ for $s_k$\;
    $x_{k+1}=x_k+M_ks_k$\;
}
\end{algorithm}

The following lemma converts readily available quantities into
information on the location of the exact solution.

\begin{lemma} \label{improvement}
Let $x_0\in\R^n$, and let $x^*\in\R^n$ be the unique solution of the linear
system $Ax=b$ with $x^*\in x_0+\mc{R}(A^T)$.
Then we have
\begin{equation}\label{imp:2}
\|w(x)\|^2+\|P(x)-x\|^2=2\langle x^*-x,P(x)-x\rangle\quad\forall\,
x\in x_0+\mc{R}(A^T).
\end{equation}
\end{lemma}

\begin{proof}
From Lemma \ref{proj:lemma}d) and the polarization identity, it follows that
\begin{align*}
\|w(x)\|^2+\|P(x)-x\|^2
&=\|x-x^*\|^2-\|P(x)-x^*\|^2+\|x-P(x)\|^2\\
&=2\langle x^*-x,P(x)-x\rangle.
\end{align*}
\end{proof}

The following lemma suggests the stopping criterion of Algorithm
\ref{kaczmarz:krylov}.

\begin{lemma} \label{no:stop}
Let $x_0\in\R^n$, let $x^*\in\R^n$ be the unique solution of the linear system
$Ax=b$ with $x^*\in x_0+\mc{R}(A^T)$, and let $x_1,\ldots,x_k\in x_0+\mc{R}(A^T)$ be points with
\begin{equation}\label{last:opt}
x_k=\argmin_{\xi\in\aff(x_0,x_1,\ldots,x_k)}\|\xi-x^*\|^2.
\end{equation}
If $P(x_k)\in\aff(x_0,x_1,\ldots,x_k)$, then we have
$P(x_k)=x_k$ and $x_k=x^*$.
\end{lemma}

\begin{proof}
If $P(x_k)\in\aff(x_1,\ldots,x_k)$, then Theorem \ref{well:known}d)
and \eqref{last:opt} yield that $P(x_k)=x_k$.
Theorem \ref{well:known}e) yields $Ax_k=b$, which in view of Theorem
\ref{well:known}a) implies $x_k=x^*$.
\end{proof}

Applying the following result repeatedly to a given initial point $x_0\in\R^n$
gives rise to Algorithm \ref{kaczmarz:krylov}.

\begin{theorem} \label{affine:search:thm}
Let $x_0\in\R^n$, let $x^*\in\R^n$ be the unique solution
of the linear system $Ax=b$ with $x^*\in x_0+\mc{R}(A^T)$.
Let $x_1,\ldots,x_k\in x_0+\mc{R}(A^T)$ be points such that
the tuple $(x_0,x_1,\ldots,x_k)$ is affinely independent
and \eqref{last:opt} holds.
If $P(x_k)\neq x_k$, then the following statements hold.
\begin{itemize}
\item [a)] The matrix
$M:=(x_0-x_k,\ldots,x_{k-1}-x_k,P(x_k)-x_k)\in\R^{n\times(k+1)}$
has full rank.
\item [b)] The minimizer
$s^*:=\argmin_{s\in\R^{k+1}}\|x_k+Ms-x^*\|^2$
is the unique solution of the linear system
\[M^TMs=\gamma e_{k+1},\]
where $\gamma:=(\|w(x_k)\|^2+\|P(x_k)-x_k\|^2)/2$
and $e_{k+1}\in\R^{k+1}$ denotes the $(k+1)$-th standard basis vector.
\item [c)] The point
\begin{equation}\label{next:point}
x_{k+1}:=x_k+Ms^*
\end{equation}
satisfies $x_{k+1}\in x_0+\mc{R}(A^T)$,
the tuple $(x_0,\ldots,x_k,x_{k+1})$ is affinely independent,
we have $\|x_{k+1}-x^*\|<\|x_k-x^*\|$,
and \eqref{last:opt} holds with $k+1$ in lieu of $k$.
\end{itemize}
\end{theorem}

\begin{proof}
a) Since \eqref{last:opt} and $P(x_k)\neq x_k$ hold, we have
$P(x_k)\notin\aff(x_0,x_1,\ldots,x_k)$ by Lemma \ref{no:stop}.
Since $(x_0,x_1,\ldots,x_k)$ is affinely
independent, this implies that the tuple $(x_0,x_1,\ldots,x_k,P(x_k))$
is affinely independent.
Hence the columns of $M$ are linearly independent.

\medskip

b) In particular, the matrix $M^TM$ is positive definite, and the quadratic
function $g:\R^{k+1}\to\R$ given by $g(s):=\|x_k+Ms-x^*\|^2$
is strictly convex with derivatives
\begin{align*}
\frac{dg}{ds_j}(s)
&=2\langle x_k+Ms-x^*,x_{j-1}-x_k\rangle,
\quad j=1,\ldots,k,\\
\frac{dg}{ds_{k+1}}(s)
&=2\langle x_k+Ms-x^*,P(x_k)-x_k\rangle.
\end{align*}
By \eqref{imp:2} and \eqref{last:opt}, the unique minimizer
$s^*$ of $g$ solves the linear equations
\begin{align*}
&\langle x_{j-1}-x_k,Ms\rangle=\langle x^*-x_k,x_{j-1}-x_k\rangle=0,\quad
j=1,\ldots,k,\\
&\langle P(x_k)-x_k,Ms\rangle=\langle x^*-x_k,P(x_k)-x_k\rangle=\gamma,
\end{align*}
which are displayed in matrix form in statement b).

\medskip

c) Since $x_0,\ldots,x_k\in x_0+\mc{R}(A)$, Theorem \ref{well:known}b)
and the definition \eqref{next:point} yield $x_{k+1}\in x_0+\mc{R}(A)$.
Moreover, definition \eqref{next:point} and the minimality of $s^*$ imply
\begin{equation}\label{almost:optimum}
x_{k+1}=\argmin_{\xi\in\aff(x_0,\ldots,x_k,P(x_k))}\|\xi-x^*\|^2.
\end{equation}
Because of Theorem \ref{well:known}d), the assumption $P(x_k)\neq x_k$ yields
\[\|x_{k+1}-x^*\|\le\|P(x_k)-x^*\|<\|x_k-x^*\|.\]
By \eqref{no:stop}, this shows $x_{k+1}\notin\aff(x_0,x_1,\ldots,x_k)$, so
the tuple $(x_0,\ldots,x_k,x_{k+1})$ is affinely independent.
Hence $\aff(x_0,\ldots,x_k,P(x_k))$ and $\aff(x_0,\ldots,x_k,x_{k+1})$
are both $k+1$-dimensional affine subspaces, and by \eqref{next:point},
we have
\begin{align*}
&\aff(x_0,\ldots,x_k,x_{k+1})\subset\aff(x_0,\ldots,x_k,P(x_k)),
\end{align*}
which means that
\begin{equation}\label{same:spaces}
\aff(x_0,\ldots,x_k,x_{k+1})=\aff(x_0,\ldots,x_k,P(x_k)).
\end{equation}
In view of \eqref{almost:optimum} this shows \eqref{no:stop}
with $k+1$ in lieu of $k$.
\end{proof}

Now we conclude that in exact arithmetic, Algorithm \ref{kaczmarz:krylov}
cannot break down, makes progress in every step, and finds a solution
of the linear system in at most $n$ steps.

\begin{corollary}\label{cor:summary}
Let $x_0\in\R^n$, and let $x^*\in\R^n$ be the unique
solution of the linear system $Ax=b$ with $x^*\in x_0+\mc{R}(A^T)$.
Then the following statements about Algorithm \ref{kaczmarz:krylov} hold.
\begin{itemize}
\item [a)] If the algorithm does not terminate in step $k$, then
the linear system in line 8 possesses a unique solution.
\item [b)] All iterates $x_k$ satisfy \eqref{no:stop}, and we have
$\|x_{k+1}-x^*\|<\|x_k-x^*\|$.
\item [c)] The algorithm terminates with $x_k=x^*$ after $k\le n$ iterations.
\end{itemize}
\end{corollary}

\begin{proof}
Applying Theorem \ref{affine:search:thm} recursively shows that all
statements from Theorem \ref{affine:search:thm}, including statements a) and b)
above, are valid for all iterates the algorithm generates.

If the algorithm terminates in step $k$, then $P(x_k)=x_k$,
and by Theorem \ref{well:known}e), this means that $x_k=x^*$.
If the algorithm does not terminate before step $k=n$,
then Theorem \ref{affine:search:thm}c) yields $\aff(x_0,x_1,\ldots,x_n)=\R^n$.
Statement \eqref{no:stop} implies $x_n=x^*$, and
since $P(x^*)=x^*$, the algorithm terminates.
\end{proof}

The paper \cite{Rieger} goes on to develop a mathematically equivalent
version of Algorithm \ref{kaczmarz:krylov} that avoids solving the normal
equations in line 8 by an updating strategy.
Instead, we will show in the following that Algorithm \ref{kaczmarz:krylov}
is a Krylov subspace method and can be equivalently formulated in terms of a
Gram-Schmidt process.

\section{A Krylov subspace method}

In this section, we demonstrate that Algorithm \ref{kaczmarz:krylov}
is a Krylov subspace method for a system of linear equations $Cx=g$.
Both $C\in\R^{n\times n}$ and $g\in\R^n$ depend on the choice of the dimensions $m_1,\ldots,m_p$
of the blocks $A_1,\ldots,A_p$ and $b_1,\ldots,b_p$, and they generate Krylov spaces
\[\mc{K}_k(C,r_0)
:=\mathrm{span}\big(r_0,Cr_0,\ldots,C^{k-1}r_0\big)\]
with $r_0:=g-Cx_0$.
The following statement can be obtained by induction.

\begin{lemma}\label{P:as:T:g}
The block Kaczmarz method from Definition \ref{block:Kaczmarz} can be written
in the form
\begin{equation}\label{P:as:T:g:formula}
P(x)=Tx+g,
\end{equation}
with $T\in\R^{n\times n}$ and $g\in\R^n$ given by
\begin{equation}\label{def:T}
T:=\prod_{k=1}^p(I-A_k^\dagger A_k),\quad
g:=\sum_{k=1}^p\Big[\prod_{j=k+1}^p(I-A_j^\dagger A_j)\Big]A_k^\dagger b_k.
\end{equation}
where $\prod_{i=j}^pE_i:=E_p\cdots E_j$ and $\prod_{i=p+1}^pE_i:=I$
for $E_1,\ldots,E_p\in\R^{n\times n}$.
\end{lemma}

According to Theorem \ref{well:known}e), finding a solution to $Ax=b$ is equivalent
to finding a vector $x\in\R^n$ with
$x=P(x)=Tx+g$.
Denoting $C:=I-T$, this identity is in turn equivalent with solving the system
\begin{equation}\label{equivalent:system}
Cx=g.
\end{equation}
Note that Algorithm \ref{kaczmarz:krylov} augments the affine basis by the residual
\[r_k=P(x_k)-x_k=g-Cx_k\]
for equation \eqref{equivalent:system} in every step.

\medskip

The structure of the matrices $T$ and $C$ reflects the geometry of the block Kaczmarz
mapping stated in Theorem \ref{well:known}.

\begin{lemma}\label{TC}
Given the orthogonal decomposition $\R^n=\mc{N}(A)\oplus\mc{R}(A^T)$,
the matrices $T$ and $C=I-T$ take the form
\[T=T_1\oplus T_2\quad\text{and}\quad C=C_1\oplus C_2,\]
where
$T_1=I$, $\|T_2\|<1$, $C_1=0$ and $\|I-C_2\|<1$.
The last statement implies that $C_2$ is invertible with
$\sigma(C_2)\subset\mathrm{int}(B_1(1))$.
\end{lemma}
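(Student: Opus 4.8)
The plan is to read off the block structure from the fact that each factor $I-A_j^\dagger A_j$ is the orthogonal projector onto $\mc{N}(A_j)$, and that $\mc{N}(A)=\bigcap_{j=1}^p\mc{N}(A_j)$ because $A^T=(A_1^T,\ldots,A_p^T)$. Writing $Q_j:=I-A_j^\dagger A_j$ and $T=Q_p\cdots Q_1$ as in Lemma \ref{P:as:T:g}, I would first check that the two summands $\mc{N}(A)$ and $\mc{R}(A^T)=\mc{N}(A)^\perp$ are $T$-invariant. For $x\in\mc{N}(A)$ every $Q_j$ fixes $x$, so $Tx=x$; this yields $T_1=I$ and hence $C_1=I-T_1=0$. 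For the second summand I would use Theorem \ref{well:known}b): since $P(0)=g$ it gives $g\in\mc{R}(A^T)$, and then $P(x)-x=g-Cx\in\mc{R}(A^T)$ forces $\mc{R}(C)\subseteq\mc{R}(A^T)$; consequently $Tx=x-Cx\in\mc{R}(A^T)$ whenever $x\in\mc{R}(A^T)$. (Alternatively, the telescoping identity $C=\sum_{j=1}^p(I-Q_j)Q_{j-1}\cdots Q_1$ exhibits $\mc{R}(C)\subseteq\mc{R}(A^T)$ directly, since each $I-Q_j=A_j^\dagger A_j$ projects onto $\mc{R}(A_j^T)$.) This establishes the block-diagonal forms $T=T_1\oplus T_2$ and $C=C_1\oplus C_2=0\oplus C_2$ with $C_2=I-T_2$.

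The heart of the argument, and the step I expect to be the main obstacle, is the strict bound $\|T_2\|<1$. Each $Q_j$ is an orthogonal projector, so $\|Q_jy\|\le\|y\|$ with equality precisely when $y\in\mc{N}(A_j)$. Tracking the partial products $y_0:=x$, $y_j:=Q_jy_{j-1}$, I would argue that $\|Tx\|=\|x\|$ forces $\|y_j\|=\|y_{j-1}\|$ at every step, hence $y_{j-1}\in\mc{N}(A_j)$ and $y_j=y_{j-1}$; inductively $x=y_0=\cdots=y_p$ lies in every $\mc{N}(A_j)$, i.e.\ $x\in\mc{N}(A)$. For $x\in\mc{R}(A^T)=\mc{N}(A)^\perp$ this means $x=0$, so $\|Tx\|<\|x\|$ for every nonzero $x\in\mc{R}(A^T)$. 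This is only a pointwise strict inequality; to upgrade it to the operator-norm bound I would invoke compactness of the unit sphere of the finite-dimensional space $\mc{R}(A^T)$ together with continuity of $x\mapsto\|Tx\|$, so that the supremum defining $\|T_2\|$ is attained and therefore strictly below $1$.

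Finally I would translate this into the statements about $C_2$. Since $I-C_2=T_2$, the bound $\|I-C_2\|=\|T_2\|<1$ is immediate. The Neumann series then shows $C_2=I-(I-C_2)$ is invertible, and for the spectrum I would use $\rho(I-C_2)\le\|I-C_2\|<1$: if $\lambda\in\sigma(C_2)$ then $1-\lambda\in\sigma(I-C_2)$, so $|1-\lambda|<1$, which places $\lambda$ in $\mathrm{int}(B_1(1))$ and in particular excludes $\lambda=0$. The only genuinely delicate point is the strict operator-norm inequality; once the compactness upgrade is in place, the spectral consequences are routine.
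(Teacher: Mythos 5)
Your proof is correct and follows essentially the same route as the paper: block structure from the projector properties of the factors $I-A_j^\dagger A_j$, a pointwise strict contraction on $\mc{R}(A^T)$ detected at the first index $i$ with $A_ix\neq 0$, and the Neumann-series/spectral consequences for $C_2$. The only notable difference is that you make explicit the compactness-of-the-unit-sphere step needed to pass from the pointwise inequality $\|Tx\|<\|x\|$ to the operator-norm bound $\|T_2\|<1$, which the paper leaves implicit.
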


\begin{proof}
If $x\in\mc{N}(A)$, then $(I-A_j^\dagger A_j)x=x$ for all $j\in\{1,\ldots,p\}$,
and hence $Tx=x\in\mc{N}(A)$.
If $x\in\mc{R}(A^T)$, then \eqref{def:T} and $\mc{R}(A^\dagger)=\mc{R}(A^T)$
imply $Tx\in\mc{R}(A^T)$.
We indeed have $T=T_1\oplus T_2$, and $T_1=I$.

\medskip

Let $x^*\in\mc{R}(A^T)$ be the unique solution to $Ax=b$ in $\mc{R}(A^T)$,
and let $z\in\mc{R}(A^T)\setminus\{x^*\}$.
By \eqref{P:as:T:g:formula} and parts e) and d) of Theorem \ref{well:known}, we have
\[\|T(z-x^*)\|
=\|P(z)-P(x^*)\|
=\|P(z)-x^*\|
<\|z-x^*\|.\]
Hence $\|Tx\|<\|x\|$ for all $x\in\mc{R}(A^T)\setminus\{0\}$,
which implies that $\|T_1\|<1$.

\medskip

The properties of $C$ follow from the properties of $T$.
\end{proof}

Algorithm \ref{kaczmarz:krylov} applied to the linear system $Ax=b$ is
a Krylov subspace method for the system $Cx=g$, which minimizes
the Euclidean norm error.

\begin{theorem}\label{what:the:original:algorithm:does}
Let $x_0\in\R^n$, and let $x^*\in\R^n$ be the unique
solution of the linear system $Ax=b$ with $x^*\in x_0+\mc{R}(A^T)$.
If Algorithm \ref{kaczmarz:krylov} generates the vectors $x_1,\ldots,x_k\in\R^n$, then
\[x_k=\argmin_{x\in x_0+\mc{K}_k(C,r_0)}\|x-x^*\|^2,\]
where $r_0:=g-Cx_0$.
\end{theorem}

\begin{proof}
We prove the statement
\begin{equation}\label{spaces:same}
\aff(x_0,\ldots,x_j)=x_0+\mc{K}_j(C,r_0)
\end{equation}
by induction for $j\in\{0,\ldots,k\}$.
The statement of the theorem then follows from Corollary \ref{cor:summary}b).

\medskip

Statement \eqref{spaces:same} is trivial for $j=0$.
Assume that \eqref{spaces:same} holds for some $j\in\{0,\ldots,k-1\}$.
Using \eqref{P:as:T:g:formula}, we see that
\begin{align*}
P(x_j)-x_0
&=Tx_j+g-x_0
=g-Cx_j+x_j-x_0\\
&=(g-Cx_0)-C(x_j-x_0)+(x_j-x_0)\\
&\in r_0+C\mc{K}_j(C,r_0)+\mc{K}_j(C,r_0)
\subset\mc{K}_{j+1}(C,r_0).
\end{align*}
Equation \eqref{same:spaces} from the proof of Theorem \ref{affine:search:thm}
holds for the same reasons as before, and hence
\[\aff(x_0,\ldots,x_j,x_{j+1})
=\aff(x_0,\ldots,x_j,P(x_j))
\subset x_0+\mc{K}_{j+1}(C,r_0).\]
Since $\dim\aff(x_0,\ldots,x_j,x_{j+1})=j+1$ and $\mathrm{dim}(\mc{K}_{j+1}(C,r_0))\le j+1$,
statement \eqref{spaces:same} holds with $j+1$ in lieu of $j$.
\end{proof}

All in all, Algorithm \ref{kaczmarz:krylov} is a Krylov subspace method
for system \eqref{equivalent:system}.

\section{Equivalent Gram-Schmidt-based algorithm}
\label{Gram:Schmidt}

In this section we introduce Algorithm \ref{new:algorithm}, which is based on
Gram-Schmidt orthogonalization.
Comparing Theorems \ref{what:the:original:algorithm:does} and
\ref{new:algorithm:works} shows that Algorithms \ref{kaczmarz:krylov} and
\ref{new:algorithm} generate identical sequences.
Hence Corollary \ref{cor:summary} ensures that when exact arithmetic
is used, Algorithm \ref{new:algorithm} makes progress in every step
and terminates with the exact solution after at most $n$ steps.

\begin{algorithm}\label{new:algorithm}
\caption{Block-Kaczmarz minimal error method (BKME)}
\KwIn{
    $A_j\in\R^{m_j\times n}$ and
    $b_j\in\R^{m_j}$ for
    $j\in\{1,\ldots,p\}$,
    $x_0\in\R^n$
}
\KwOut{
    solution $x_k\in\R^n$ of the linear system $Ax=b$
}
\For{$k=0$ \KwTo $\infty$}{
    $(y_k,\omega_k)
    =\text{[Algorithm \ref{Block:kaczmarz:algorithm}]}
    (A_1,\ldots,A_p,b_1,\ldots,b_p;x_k)$\label{alg:P(xk)}\;
    $r_k=y_k-x_k$\label{alg:rk}\;
    \uIf{$\|r_k\|^2=0$\label{alg:stop}} {\Return $x_k$\;}
    $\tilde q_{k+1}=r_k-\sum_{j=1}^k\langle r_k,q_j\rangle q_j$\;
    $q_{k+1}=\tilde q_{k+1}/\|\tilde q_{k+1}\|$\;
    $\mu_{k+1}=(\omega_k+\|r_k\|^2)/(2\|\tilde q_{k+1}\|)$\;
    $x_{k+1}=x_k+\mu_{k+1}q_{k+1}$\;
}
\end{algorithm}

\begin{theorem}\label{new:algorithm:works}
Algorithm \ref{new:algorithm} is well-defined:
When $r_k\neq 0$, then no division by zero occurs in line 7
and $q_{k+1}\neq 0$.
Furthermore, let $x_0\in\R^n$, and let $x^*\in\R^n$ be the unique
solution of the linear system $Ax=b$ with $x^*\in x_0+\mc{R}(A^T)$.
If Algorithm \ref{new:algorithm} generates the iterates
$x_1,\ldots,x_k\in\R^n$, then
\[x_k=\argmin_{x\in x_0+\mc{K}_k(C,r_0)}\|x-x^*\|^2,\]
where $r_0:=g-Cx_0$.
\end{theorem}

\begin{proof}
Recalling equation \eqref{P:as:T:g:formula} and the notation $C=I-T$,
it follows from lines \ref{alg:P(xk)} and \ref{alg:rk}
that
\begin{itemize}
\item [a)] we have $r_j=g-Cx_j$ for $j\in\{0,\ldots,k\}$.
\end{itemize}
In the following, we prove that
\begin{itemize}
\item [b)] we have $\tilde q_j\neq 0$ when $j>0$,
\item [c)] the tuple $(q_1,\ldots,q_j)$ is an orthonormal basis of $\mc{K}_j(C,r_0)$, and
\item [d)] we have
$x_j=\argmin_{x\in x_0+\mc{K}_j(C,r_0)}\|x-x^*\|^2$
\end{itemize}
by induction for $j\in\{0,\ldots,k\}$.
The statements of the theorem then follow.

\medskip

Since $\mc{K}_0(C,r_0)=\{0\}$, there is nothing to show when $j=0$.
Now assume that statements b) to d) hold for some $j\in\{0,\ldots,k-1\}$.

Because of a) and induction hypothesis d), we find
\begin{equation}\label{rk:in}
r_j=g-Cx_j=r_0+C(x_0-x_j)\in\mc{K}_{j+1}(C,r_0).
\end{equation}
Since the algorithm did not terminate in $j$-th iteration,
the stopping criterion in line \ref{alg:stop} yields
$\|P(x_j)-x_j\|^2=\|r_j\|^2\neq 0$.
In view of Theorem \ref{well:known}d) and induction hypothesis d), this implies
\[
\|x_j+r_j-x^*\|^2
=\|P(x_j)-x^*\|^2
<\|x_j-x^*\|^2
=\min_{x\in x_0+\mc{K}_j(C,r_0)}\|x-x^*\|^2.
\]
Hence $x_j+r_j\notin\mc{K}_j(C,r_0)$, and since $x_j\in\mc{K}_j(C,r_0)$,
we have $r_j\notin\mc{K}_j(C,r_0)$.
By \eqref{rk:in} and c), we thus have
\[\tilde q_{j+1}=r_j-\sum_{i=1}^j\langle r_j,q_i\rangle q_i\in\mc{K}_{j+1}(C,r_0)\setminus\{0\},\]
which implies b) with $j+1$ in lieu of $j$,
as well as $q_{j+1}\in\mc{K}_{j+1}(C,r_0)$.
Since $\tilde{q}_{j+1}\neq 0$, a division by zero cannot occur.
Because of c) and since
\begin{equation}\label{by:construction}
\langle q_{j+1},q_\ell\rangle
=\tfrac{1}{\|\tilde q_{j+1}\|}\langle r_j-\sum_{i=1}^j\langle r_j,q_i\rangle q_i,q_\ell\rangle
=0\quad\forall\,\ell\in\{1,\ldots,j\},
\end{equation}
statement c) holds with $j+1$ in lieu of $j$.
We also have
\[x_{j+1}=x_j+\mu_{j+1}q_{j+1}\in x_0+\mc{K}_{j+1}(C,r_0).\]
Parts c) and d) of the induction hypothesis imply that
\begin{equation}\label{orthogonal}
\langle x_j-x^*,q_i\rangle=0\quad\forall\,i\in\{1,\ldots,j\}.
\end{equation}
Combining statements \eqref{by:construction} and \eqref{orthogonal},
we obtain
\begin{equation}\label{first:k}
\langle x_{j+1}-x^*,q_i\rangle
=\langle x_{j+1}-x_j,q_i\rangle+\langle x_j-x^*,q_i\rangle
=\mu_{j+1}\langle q_{j+1},q_i\rangle
=0
\end{equation}
for all $i\in\{1,\ldots,j\}$.
Statement \eqref{orthogonal} also implies that
\begin{equation}\label{shift}
\langle x_j-x^*,r_j\rangle
=\langle x_j-x^*,\tilde{q}_{j+1}+\sum_{i=1}^j\langle r_j,q_i\rangle q_i\rangle
=\langle x_j-x^*,\tilde{q}_{j+1}\rangle.
\end{equation}
Using \eqref{imp:2} and \eqref{shift}, we see that
\[
\mu_{j+1}
=\tfrac{1}{\|\tilde{q}_{j+1}\|}\langle x^*-x_j,r_j\rangle
=\tfrac{1}{\|\tilde{q}_{j+1}\|}\langle x^*-x_j,\tilde{q}_{j+1}\rangle\\
=\langle x^*-x_j,q_{j+1}\rangle,
\]
so we obtain
\begin{equation}\label{last:k}\begin{aligned}
\langle x_{j+1}-x^*,q_{j+1}\rangle
&=\langle x_j+\mu_{j+1}q_{j+1}-x^*,q_{j+1}\rangle\\
&=\langle x_j-x^*,q_{j+1}\rangle
+\mu_{j+1}
= 0.
\end{aligned}\end{equation}
Since $q_1,\ldots,q_{j+1}$ is an orthonormal basis of $\mc{K}_{j+1}(C,r_0)$,
statements \eqref{first:k} and \eqref{last:k} imply
d) with $j+1$ in lieu of $j$.
\end{proof}

We show that there exists a prominent example in which the assumptions
of Theorem \ref{BKME:error:estimate} hold.

\begin{lemma}
Given the orthogonal decomposition $\R^n=\mc{N}(A)\oplus\mc{R}(A^T)$,
the symmetric block Kaczmarz method
\[\tilde P=P_1\circ\ldots\circ P_{p-1}\circ P_p\circ P_{p-1}\circ\ldots\circ P_1\]
can be written in the form $\tilde P(x)=\tilde Tx+\tilde g$
with $\tilde T=I\oplus\tilde T_2$.
The matrix $\tilde C:=I-\tilde T\in\R^{n\times n}$ can be written
in the form $\tilde C=0\oplus\tilde C_2$, and $\tilde C_2$
is symmetric and positive definite.
\end{lemma}

\begin{proof}
The operator $\tilde P$ coincides with the block Kaczmarz method from Definition
\ref{block:Kaczmarz} applied to the data
\[
\tilde A=(A_1^T,\ldots,A_{p-1}^TA_p^TA_{p-1}^T,\ldots,A_1^T)^T
\ \text{and}\
\tilde b=(b_1^T,\ldots,b_{p-1}^Tb_p^Tb_{p-1}^T,\ldots,b_1^T)^T,
\]
so Lemmas \ref{P:as:T:g} and \ref{TC} apply.
Lemma \ref{P:as:T:g} yields a representation
\[\tilde T=(I-A_1^\dagger A_1)\cdots(I-A_{p-1}^\dagger A_{p-1})
(I-A_{p}^\dagger A_{p})(I-A_{p-1}^\dagger A_{p-1})\cdots(I-A_1^\dagger A_1).\]
Since $(A_j^\dagger A_j)^T=A_j^\dagger A_j$ by \cite[(2.2.9)]{Bjorck},
we have $(I-A_j^\dagger A_j)^T=I-A_j^\dagger A_j$ for $j\in\{1,\ldots,p\}$,
and hence $\tilde T^T=\tilde T$.
This implies $\tilde C^T=\tilde C$ and hence $\tilde C_2^T=\tilde C_2$.
Lemma \ref{TC} yields $\sigma(\tilde C_2)\subset\mathrm{int}(B_1(1))$,
and therefore $\tilde C_2$ is positive definite.
\end{proof}

We provide an error estimate for BKME.
Note that, in general, the matrix $C_2$ need not be symmetric.

\begin{theorem}\label{BKME:error:estimate}
Let $x_0\in\R^n$, and let $x^*\in\R^n$ be the unique
solution of the linear system $Ax=b$ with $x^*\in x_0+\mc{R}(A^T)$.
If Algorithm \ref{new:algorithm} generates the iterates
$x_1,\ldots,x_k\in\R^n$ and the matrix $C_2$ is symmetric positive definite,
then we have
\begin{equation}\label{BKME:error}
\|e_{k}\|\le 2\left(\frac{\sqrt{\kappa(C_2)}-1}{\sqrt{\kappa(C_2)}+1}\right)^k\|e_{0}\|,
\end{equation}
where $e_k:=x^*-x_k$ and $\kappa(C_2):=\lambda_{\max}(C_2)/\lambda_{\min}(C_2)$.
\end{theorem}

\begin{proof}
Theorem \ref{new:algorithm:works} can be restated in the form
\begin{equation*}
e_k=\argmin_{e\in e_0+\mathrm{span}(Ce_0, C^2e_0,\ldots,C^ke_0)}\|e\|^2.
\end{equation*}
Let $\mc{P}_k$ be the space of polynomials of degree at most $k$,
and denote
\[\hat{\mc{P}}_k:=\{p\in\mc{P}_k: p(0)=1\}.\]
Then $e_k=p_k(C)e_0$, where
$p_k=\argmin_{p\in\hat{\mc{P}}_k}\|p(C)e_0\|$.
We take an eigenvalue factorization $C_2=V\Lambda V^{-1}$,
and since $e_0\in\mc{R}(A^T)$, Lemma \ref{TC} yields
\begin{align*}
\|e_{k}\|
&=\min_{p\in\hat{\mc{P}}_k}\|p(C)e_0\|
=\min_{p\in\hat{\mc{P}}_k}\|p(C_2)e_0\|\\
&=\min_{p\in\hat{\mc{P}}_k}\|p(V\Lambda V^{-1})e_0\|
=\min_{p\in\hat{\mc{P}}_k}\|Vp(\Lambda)V^{-1}e_0\|
\le\min_{p\in\hat{\mc{P}}_k}\|p(\Lambda)\|\,\|e_0\|.
\end{align*}
Now we can follow verbatim the proof of Theorem 3.1.1 in \cite{Greenbaum}
and find that the convergence rate is bounded by
\[
\frac{\|e_{k}\|}{\|e_0\|}
\le\min_{p\in\hat{\mc{P}}_k}\|p(\Lambda)\|
=\min_{p\in\hat{\mc{P}}_k}\max_{i=1,\ldots,n}|p(\lambda_i)|
\le 2\left(\tfrac{\sqrt{\kappa(C_2)}-1}{\sqrt{\kappa(C_2)}+1}\right)^k,
\]
where the final step is achieved by constructing a $p\in\hat{\mc{P}}_k$
for which the bound holds.
\end{proof}

Up to our knowledge, there is no theoretical result relating
$\kappa(C_2)$ to $\kappa(A)$.
Tables \ref{table:info:A} and \ref{table:data:C}
detail these numbers for the numerical experiments carried out in this paper.

\section{Comparison with Craig's method}

\begin{algorithm}\label{alg:CGNE}
\caption{Craig's method (CGME or CGNE)}
\KwIn{
    $A\in\R^{m\times n}$ and
    $b\in\R^m$,
    $x_0\in\R^n$
}
\KwOut{
    solution $x_k\in\R^n$ of the linear system $Ax=b$
}
$r_0=b-Ax_0$\;
$p_0=A^Tr_0$\;
\For{$k=0$ \KwTo $\infty$}{
    $\alpha_k=\|r_k\|^2/\|p_k\|^2$\;
	$x_{k+1}=x_k+\alpha_kp_k$\;
	$r_{k+1}=r_k-\alpha_kAp_k$\;
	$\beta_k=\|r_{k+1}\|^2/\|r_k\|^2$\;
	$p_{k+1}=A^Tr_{k+1}+\beta_kp_k$\;
}
\end{algorithm}

Craig's method, also called CGME (CG minimal error) or
CGNE (CG on normal equations), is derived by applying CG to the system
\begin{equation*}
AA^Tu=b.
\end{equation*}
Since CG converges on consistent positive semi-definite problems, the method
is well-defined \cite[Section 4.5.3]{Bjorck}.
It is shown in \cite[Section 8.3.2]{Saad} that the transformation $x=A^Tu$
converts this iteration into Algorithm \ref{alg:CGNE}.
It is easy to check that the directions $p_k$ are mutually orthogonal w.r.t.\
the Euclidean inner product, and the minimality of the $AA^T$-norm error
of the CG iterates translates into the Euclidean optimality
\begin{equation*}
x_k=\argmin_{x\in x_0+\mc{K}_k(A^TA,A^Tr_0)}\|x-x^*\|^2
\end{equation*}
for CGME.
This poperty makes CGME the natural benchmark for BKME.
Note that this equation differs from the statement of Theorem
\ref{new:algorithm:works}, because BKME and CGME generate different
Krylov subspaces.

\medskip

Working with the Gram matrix $AA^T$ comes at the cost of squaring the condition number
\cite[Section 8.1]{Saad}.
This manifests in the error estimate
\begin{equation}\label{CGME:error:estimate}
\|x^*-x_k\|\le 2\left(\frac{\kappa(A)-1}{\kappa(A)+1}\right)^k\|x^*-x_0\|
\end{equation}
for CGME, which can be derived from the well-known error estimate for CG.
As a relationship between $\kappa(C_2)$ and $\kappa(A)$ does not seem
to be known, we cannot directly compare the error estimates \eqref{BKME:error}
and \eqref{CGME:error:estimate}.

\medskip

While CGME needs procedures to form matrix-vector products with both $A$
and $A^T$, BKME in its most basic form with $p=m$
as in Remark \ref{reduce:to:Kaczmarz} only needs access to one row of $A$
at a time.
This has been considered a decisive advantage of row-action methods
\cite{Censor}.
From a parallelization standpoint, however, it is often advantageous
to work with a full matrix-vector product, which favors CGME.

\medskip

Bj\"orck observes in \cite[Section 4.5.3]{Bjorck} that while the error of CGME
is monotone decreasing, the residual can fluctuate wildly and may therefore not
be a good basis for a stopping criterion.
Instead, based on a backward error analysis of the least-squares problem
$\min_{x\in\R^n}\|b-Ax\|$, he suggests to terminate CGME for consistent
systems $Ax=b$ when
\begin{equation}\label{Bjorck:stopping:criterion}
\|b-Ax_k\|\le\epsilon(\|A\|\|x_k\|+\|b\|),
\end{equation}
where $\epsilon$ is a small multiple of machine precision.
It is a drawback of this approach that the norm $\|A\|$ has to be either known
or estimated.

\medskip

Both CGME and BKME eventually become unstable due to round-off errors and need
to be restarted.
CGME has a lower complexity per step than BKME because of the Gram-Schmidt process,
which becomes a burden for large iteration numbers.
The significance of this difference depends on the performance of the
methods on a given problem and the restarting policy.

\section{Numerical experiments}

We compare BKME and CGME when applied to three model problems
from spherical Radon transform tomography (spericaltomo), parallel
beam X-ray CT (paralleltomo) and seismic travel time tomography
(seismicwavetomo).
We use the Air Tools II library \cite{Hansen} with default settings
to generate linear systems corresponding to the resolutions
of 32x32 pixels, 64x64 pixels and 128x128 pixels, respectively,
for each problem, choosing the Shepp-Logan phantom for sphericaltomo
and paralleltomo, and the tectonic subduction zone for seismicwavetomo
as objects to be reconstructed.
The significance of the problems and the construction of the linear
systems is detailed in \cite{Hansen}.
As the Kaczmarz method is sensitive to the ordering of the rows,
we shuffle the rows randomly to avoid artifacts in the behavior
of BKME from the specific order in which the rows of the system
are generated.
Table \ref{table:info:A} summarizes key properties of the resulting matrices,
where $\mathrm{nnz}(A)$, $\delta(A)$, $\|A\|$ and $\kappa(A)$ denote
the number of nonzero elements, the density, the spectral norm
and the spectral condition number of $A$.
The final column shows the convergence rate of CGME from
bound \eqref{CGME:error:estimate}.

\medskip

\begin{table}
\footnotesize\centering
\begin{tabular}{lrrrrrrl}
\toprule
& pixels & size(A)& nnz(A) & $\delta(A)$ & $\|A\|$ & $\kappa(A)$
& $\frac{\kappa(A)-1}{\kappa(A)+1}$\\
\midrule
spherical- & 32x32 & 7092x1024 & 204382 & 0.0281 & 3.37 & 23 & 0.912\\
tomo & 64x64 & 14341x4096 & 825366 & 0.0141 & 2.39 & 37 & 0.95\\
& 128x128 & 28590x16384 & 3283898 & 0.0070 & 1.69 & 116 & 0.98\\
\midrule
parallel- & 32x32 & 7330x1024 & 234272 & 0.0312 & 75 & 340 & 0.994\\
tomo & 64x64 & 14686x4096 & 938572 & 0.0156 & 106 & 1010 & 0.998\\
& 128x128 & 29370x16384 & 3754696 & 0.0078 & 149 & 3631 & 0.9995\\
\midrule
seismic- & 32x32 & 2048x1024 & 334022 & 0.1593 & 50 & 7.79E7
& 1-0.25E-7\\ 
wavetomo & 64x64 & 8192x4096 & 5340749 & 0.1592 & 101 & 1.67E8
& 1-0.12E-7\\ 
& 128x128 & 32768x16384 & 85345237 & 0.1590 & 202 & 7.39E8
& 1-0.27E-8\\ 
\bottomrule
\end{tabular}
\caption{Basic information on test matrices $A$ used in numerical experiments.}
\label{table:info:A}
\end{table}

\begin{table}\scriptsize\centering
\begin{tabular}{p{0.5cm}rrrrrr}
\toprule
\multirow{9}{*}{\rotatebox[origin=c]{90}
{sphericaltomo\hspace{14.5ex}}} &
32x32 pixels & s=2 & s=4 & s=8 & s=16 & s=32\\
& $\|C\|$ & 1.10 & 1.10 & 1.10 & 1.10 & 1.10\\
& $\kappa(C)$ & 3.54 & 3.53 & 3.51 & 3.48 & 3.40\\
& $\frac{\sqrt{\kappa(C)}-1}{\sqrt{\kappa(C)}+1}$
& 0.31 & 0.31 & 0.30 & 0.30 & 0.30\\
\cmidrule{2-7}
& 64x64 pixels & s=2 & s=4 & s=8 & s=16 & s=32\\
& $\|C\|$ & 1.23 & 1.23 & 1.23 & 1.23 & 1.23\\
& $\kappa(C)$ & 7.85 & 7.84 & 7.82 & 7.81 & 7.77\\
& $\frac{\sqrt{\kappa(C)}-1}{\sqrt{\kappa(C)}+1}$
& 0.47 & 0.47 & 0.47 & 0.47 & 0.47\\
\cmidrule{2-7}
&128x128 pixels & s=2 & s=4 & s=8 & s=16 & s=32\\
& $\|C\|$ & 1.31 & 1.30 & 1.31 & 1.31 & 1.31\\
& $\kappa(C)$ & 53.7 & 53.7 & 53.7 & 53.6 & 53.6\\
& $\frac{\sqrt{\kappa(C)}-1}{\sqrt{\kappa(C)}+1}$
& 0.76 & 0.76 & 0.76 & 0.76 & 0.76\\
\midrule
\multirow{9}{*}{\rotatebox[origin=c]{90}{paralleltomo\hspace{15ex}}} &
32x32 pixels & s=2 & s=4 & s=8 & s=16 & s=32\\
& $\|C\|$ & 1.10 & 1.10 & 1.10 & 1.10 & 1.09\\
& $\kappa(C)$ & 608 & 605 & 596 & 588 & 566\\
& $\frac{\sqrt{\kappa(C)}-1}{\sqrt{\kappa(C)}+1}$
& 0.92 & 0.92 & 0.92 & 0.92 & 0.92\\
\cmidrule{2-7}
& 64x64 pixels & s=4 & s=8 & s=16 & s=32 & s=64\\
& $\|C\|$ & 1.22 & 1.22 & 1.23 & 1.23 & 1.23\\
& $\kappa(C)$ & 5143 & 5129 & 5098 & 5062 & 4983\\
& $\frac{\sqrt{\kappa(C)}-1}{\sqrt{\kappa(C)}+1}$
& 0.97 & 0.97 & 0.97 & 0.97 & 0.97\\
\cmidrule{2-7}
& 128x128 pixels & s=8 & s=16 & s=32 & s=64 & s=128\\
& $\|C\|$ & 1.31 & 1.31 & 1.31 & 1.32 & 1.32\\
& $\kappa(C)$ & 56318 & 56273 & 56134 & 55938 & 55411\\
& $\frac{\sqrt{\kappa(C)}-1}{\sqrt{\kappa(C)}+1}$
& 0.9916 & 0.9916 & 0.9916 & 0.9916 & 0.9915\\
\midrule
\multirow{9}{*}{\rotatebox[origin=c]{90}{seismicwavetomo\hspace{13ex}}} &
32x32 pixels & s=4 & s=8 & s=16 & s=32 & s=64\\
& $\|C\|$ & 1.08 & 1.10 & 1.12 & 1.13 & 1.17\\
& $\kappa(C)$ & 1.81E13 & 1.78E13 & 1.66E13 & 1.38E13 & 9.77E12\\
& $\frac{\sqrt{\kappa(C)}-1}{\sqrt{\kappa(C)}+1}$
& 1-0.47E-6 & 1-0.47E-6 & 1-0.49E-6 & 1-0.54E-6 & 1-0.64E-6 \\
\cmidrule{2-7}
& 64x64 pixels & s=8 & s=16 & s=32 & s=64 & s=128\\
& $\|C\|$ & 1.06 & 1.07 & 1.08 & 1.10 & 1.11\\
& $\kappa(C)$ & 2.18E13 & 2.01E13 & 1.68E13 & 1.25E13 & 7.90E12\\
& $\frac{\sqrt{\kappa(C)}-1}{\sqrt{\kappa(C)}+1}$
& 1-0.43E-6 & 1-0.45E-6 & 1-0.49E-6 & 1-0.57E-6 & 1-0.71E-6 \\
\cmidrule{2-7}
& 128x128 pixels & s=16 & s=32 & s=64 & s=128 & s=256\\
& $\|C\|$ & 1.05 & 1.06 & 1.07 & 1.07 & 1.07\\
& $\kappa(C)$ & 1.00E14 & 8.79E13 & 6.74E13 & 4.42E13 & 2.25E13\\
& $\frac{\sqrt{\kappa(C)}-1}{\sqrt{\kappa(C)}+1}$
& 1-0.20E-6 & 1-0.21E-6 & 1-0.24E-6 & 1-0.30E-6 & 1-0.42E-6\\
\bottomrule
\end{tabular}
\caption{Basic information on matrices $C$ corresponding to test matrices $A$
from Table \ref{table:info:A} with various block sizes $s$.
In all examples we have $C_2=C$.}
\label{table:data:C}
\end{table}

\begin{figure}\begin{center}
\includegraphics[trim={1.1cm 1cm 1.2cm 0.5cm},clip]{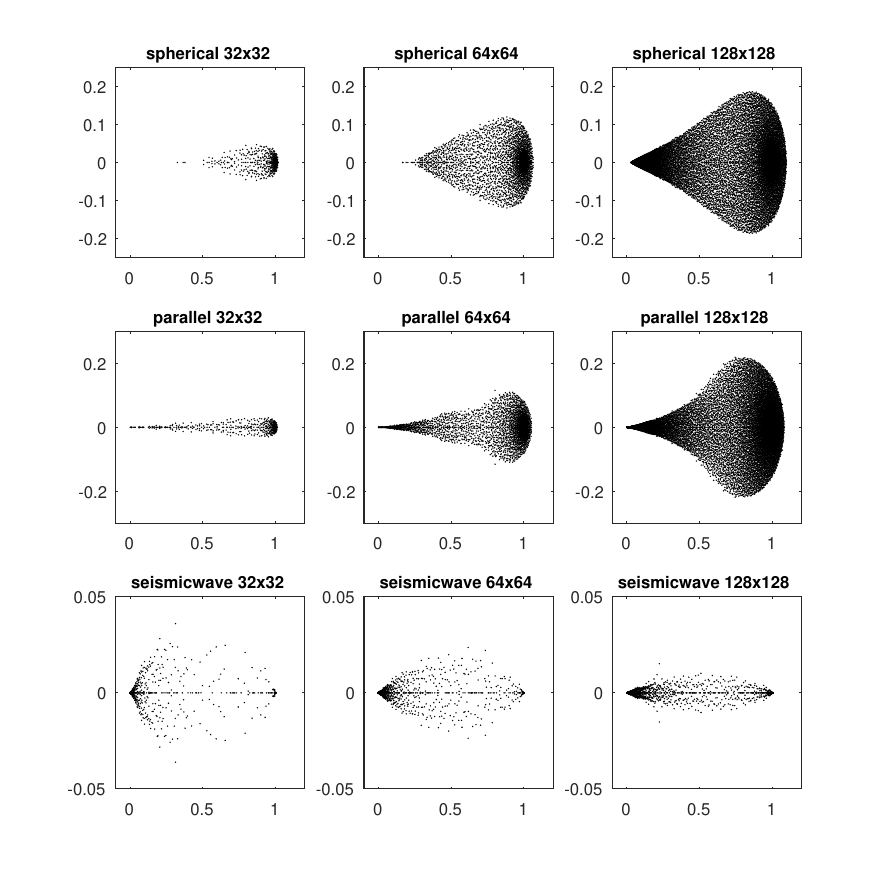}
\caption{Spectrum of matrix $C$ for all test matrices $A$.}
\label{fig:spectra}
\end{center}\end{figure}

\begin{figure}\begin{center}
\includegraphics[trim={0.5cm 0.5cm 1.2cm 0.5cm},clip]{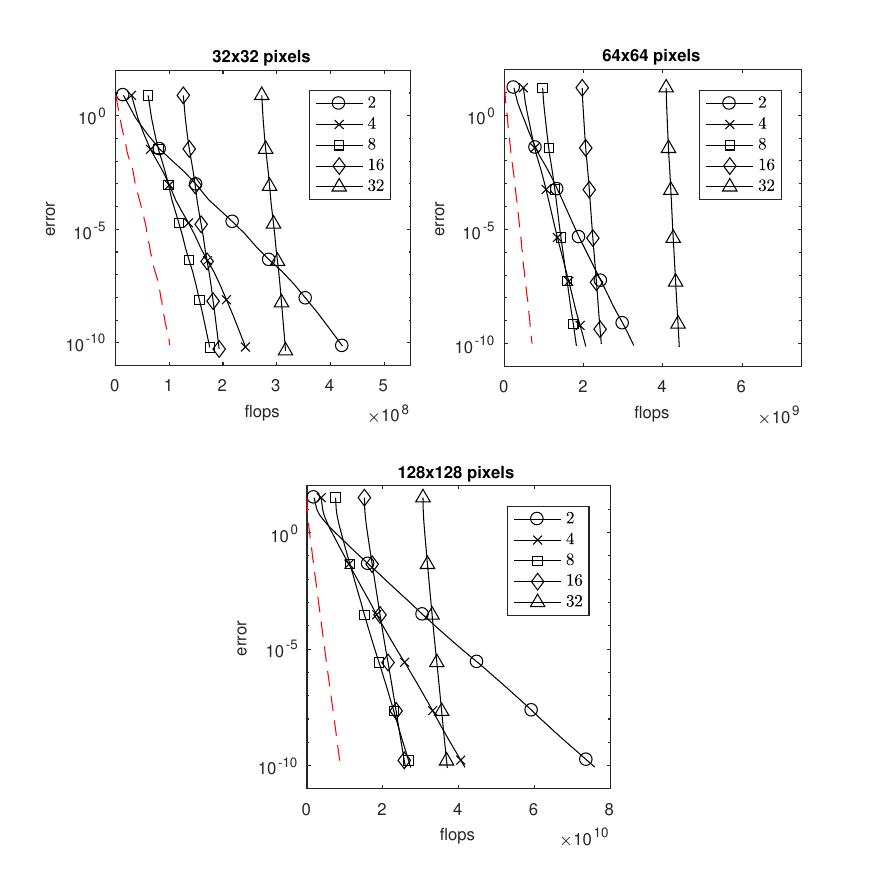}
\caption{Performance of BKME on problem \emph{sphericaltomo} for three different
resolutions and multiple block sizes.
Horizontal displacements of initial points reflect precomputation
of pseudoinverses.
Dashed red line is CGME.}
\label{fig:sphericaltomo}
\end{center}\end{figure}

\begin{figure}\begin{center}
\includegraphics[trim={0.5cm 0.5cm 1.2cm 0.5cm},clip]{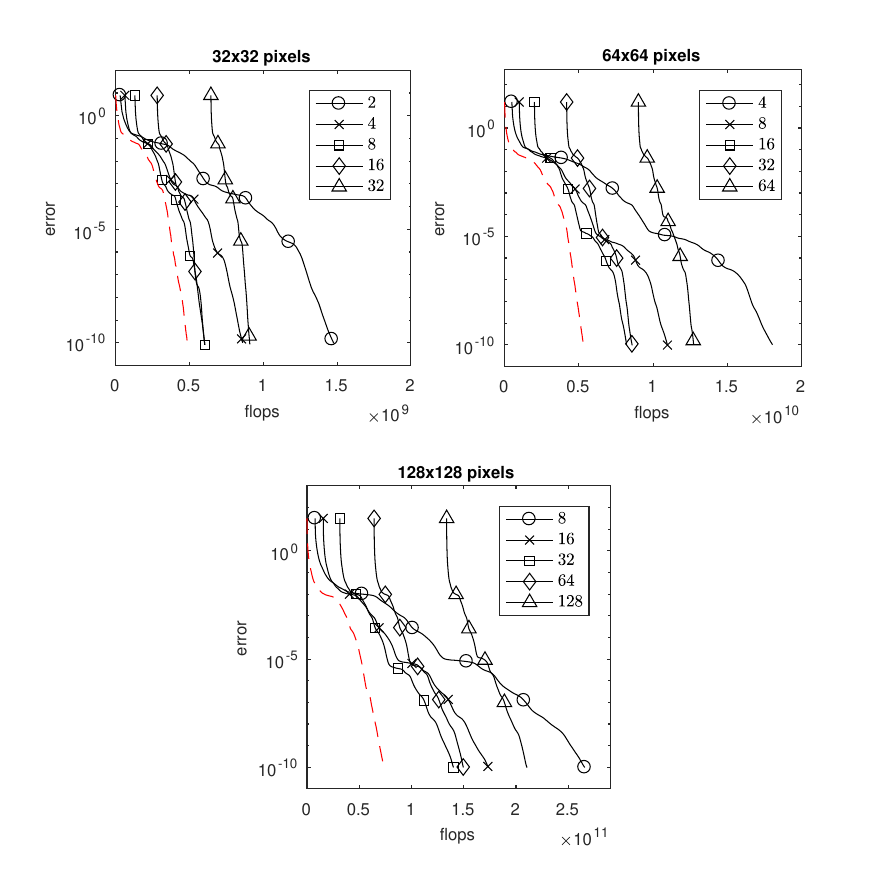}
\caption{Performance of BKME on problem \emph{paralleltomo} for three different
resolutions and multiple block sizes.
Horizontal displacements of initial points reflect precomputation
of pseudoinverses.
Dashed red line is CGME.}
\label{fig:paralleltomo}
\end{center}\end{figure}

\begin{figure}\begin{center}
\includegraphics[trim={0.5cm 0.5cm 1.2cm 0.5cm},clip]{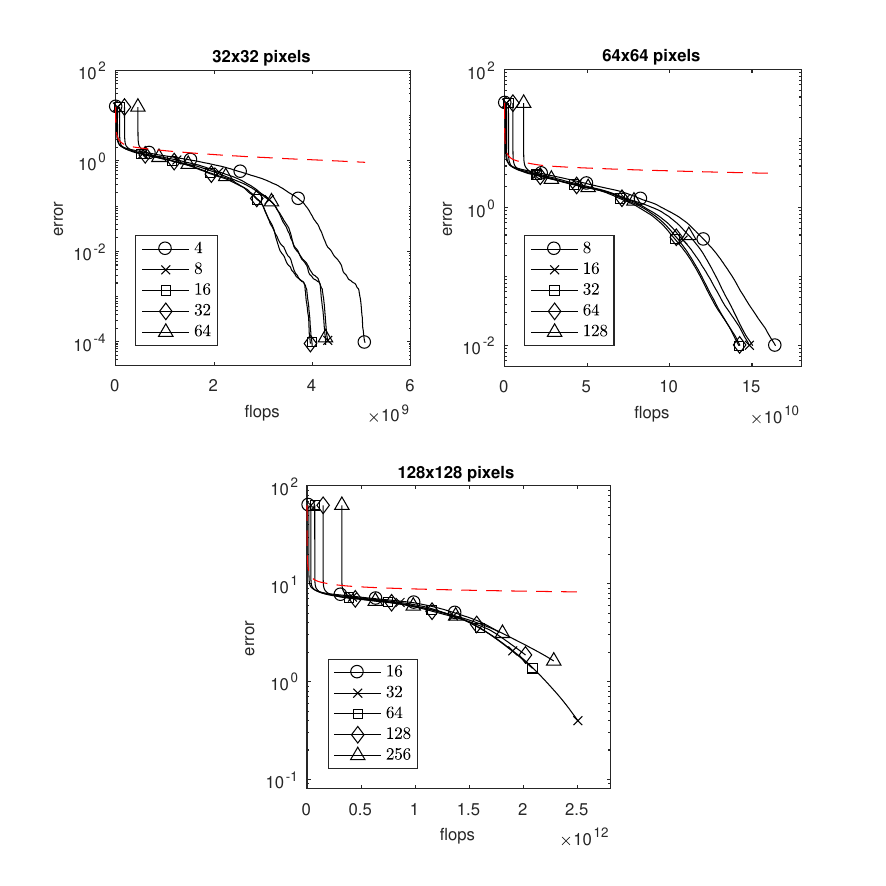}
\caption{Performance of BKME on problem \emph{seismicwavetomo} for
different resolutions and multiple block sizes.
Horizontal displacements of initial points reflect precomputation
of pseudoinverses.
Dashed red line is CGME.}
\label{fig:seismicwavetomo}
\end{center}\end{figure}

As no quantitative results on the relationship between the matrix $A$ and the
corresponding matrix $C$ seem to be known, we compute and examine the
matrix $C$ for every matrix $A$ and every block size we consider.
Table \ref{table:data:C} shows the spectral norm $\|C\|$ and the spectral
condition number $\kappa(C)$ for every such matrix $C$.
In particular, we see that $C$ is invertible, and hence the matrix $C_2$
from Lemma \ref{TC} coincides with $C$ in all examples.
In addition, Table \ref{table:data:C} shows the convergence rate of BKME
that is suggested by estimate \eqref{BKME:error}.
Recall that the bound only holds for symmetric matrices $C$ and may,
therefore, not be satisfied in our examples.
We plot the spectra of the matrices $C$ in Figure \ref{fig:spectra}.
As the block size of BKME does not seem to have a major influence on the
visual appearance of the spectrum, we only plot the spectrum of $C$
for one particular block size for every problem and resolution.

\medskip

Figures \ref{fig:sphericaltomo}, \ref{fig:paralleltomo} and
\ref{fig:seismicwavetomo} show the performance of both BKME and CGME
when applied to all model problems.
It is measured in terms of the error $\|e_k\|=\|x^*-x_k\|$
and the flops carried out to achieve this accuracy, including the
work invested into the precomputation of pseudoinverses and the
Gram-Schmidt process for BKME.
The iterations are stopped when a certain error or a certain number
of flops is reached.

The plots show that the optimal block size for BKME varies with
both problem type and resolution.
They also suggest that CGME may outperform BKME on systems with
a low condition number, and that BKME may outperform CGME on
systems with a higher condition number.

Though error estimate \eqref{BKME:error:estimate} does not apply in the
situation of our experiments,
because it holds for symmetric matrices $C$ only, it is only violated
in a minor way in problem sphericaltomo 128x128, and it is correct for
every other computation we made.
It is, however, quite pessimistic for problem paralleltomo, and
extremely pessimistic for problem seismicwavetomo.

We found that clock time was not a reliable measure for performance,
because it seemed to depend strongly on implementational details
and quirks of the programming language (Julia) used.
However, it seemed clear that larger blocks lead to shorter
runtimes for implementations in Julia.

\medskip

\begin{figure}\begin{center}
\includegraphics[trim={0.5cm 0.5cm 1.2cm 0.5cm},clip]{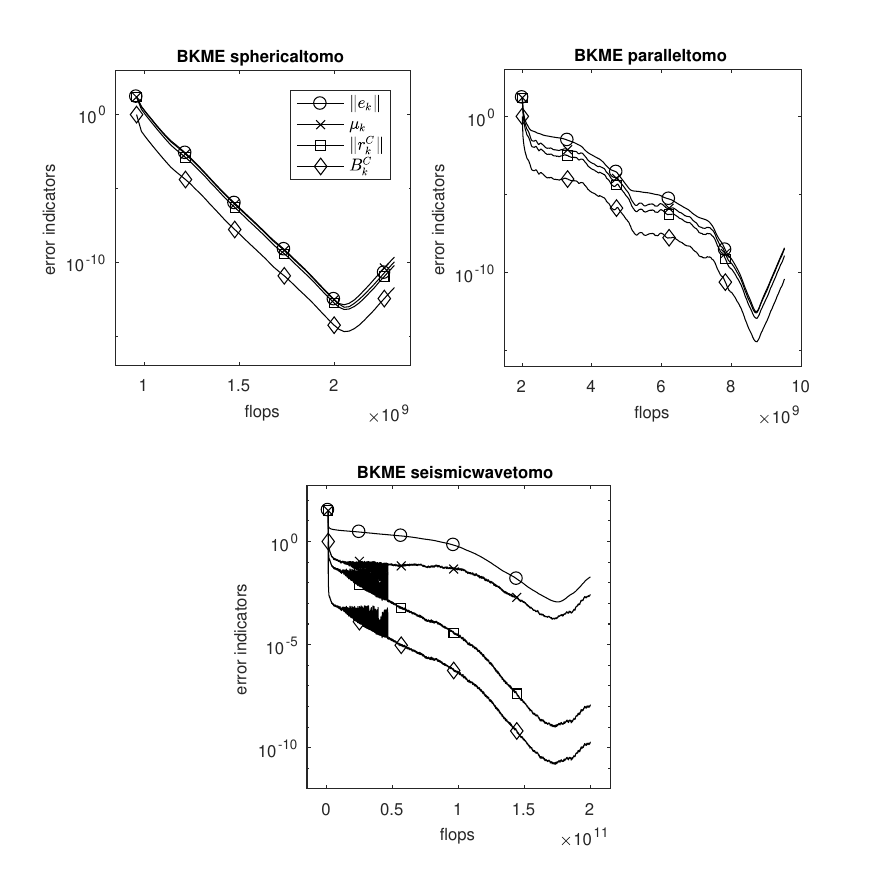}
\caption{Behavior of error indicators for BKME on model problems
with 64x64 pixels.}
\label{fig:kaczmarz}
\end{center}\end{figure}

\begin{figure}\begin{center}
\includegraphics[trim={0.5cm 0.5cm 1.2cm 0.5cm},clip]{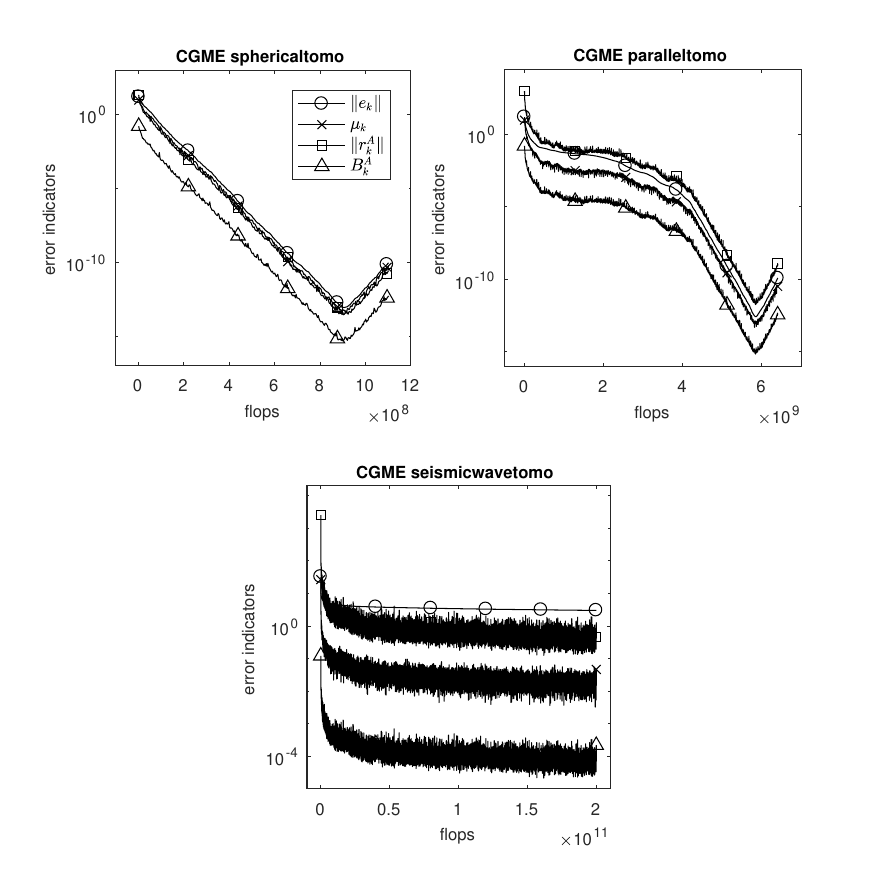}
\caption{Behavior of error indicators for CGME on model problems
with 64x64 pixels.}
\label{fig:craig}
\end{center}\end{figure}

In Figures \ref{fig:kaczmarz} and \ref{fig:craig} we plot the error
and several error indicators that may serve as a stopping criterion
for both BKME and CGME.
We plot the error and the step length
\[\|e_k\|:=\|x^*-x_k\|
\quad\text{and}\quad
\mu_k:=\|x_{k+1}-x_k\|
\]
of both methods, the residuals
\[r^A_k:=b-Ax_k
\quad\text{and}\quad
r^C_k:=g-Cx_k,\]
w.r.t.\ the linear systems $Ax=b$ and $Cx=g$, and the error indicators
\[B^A_k:=\frac{\|b-Ax_k\|}{\|A\|\|x_k\|+\|b\|}
\quad\text{and}\quad
B^C_k:=\frac{\|g-Cx_k\|}{\|C\|\|x_k\|+\|g\|}\]
motivated by \eqref{Bjorck:stopping:criterion} for both systems.
The indicator $B^A_k$ requires the residual $r^A_k$,
which is computed by CGME, but not by BKME.
We know, however, from Lemma \ref{TC} that $\|C\|\in(0,2)$, and data
gathered from the numerical examples in Table \ref{table:data:C} suggests
that $\|C\|\approx 1$ for tomography and related problems.
Since the residual $r^C_k$ is computed in line 3 of Algorithm \ref{Gram:Schmidt}
and $g=P(0)$, all ingredients of $B^C_k$ are known with a reasonable accuracy.

We find that all error indicators, including $B^A_k$ and $B^C_k$, oscillate
for both methods.
For BKME, the step length $\mu_k$ turns out to be the most reliable guess
for the true error,
while no general statement can be made about CGME.
The indicators $B^A_k$ and $B^C_k$ both seem to be much too optimistic to
be practically useful.

Figures \ref{fig:kaczmarz} and \ref{fig:craig} also show that both BKME
and CGME eventually become unstable in a very similar fashion, which means
that both methods require restarts when they do not make satisfactory progress.

%

\bibliographystyle{plain}
\bibliography{main}
\end{document}